\numberwithin{equation}{section}
\numberwithin{figure}{section}
\theoremstyle{plain}
\newtheorem{thm}{\protect\theoremname}
  \theoremstyle{definition}
  \newtheorem{defn}[thm]{\protect\definitionname}
  \theoremstyle{remark}
  \newtheorem{rem}[thm]{\protect\remarkname}
  \theoremstyle{plain}
  \newtheorem{lem}[thm]{\protect\lemmaname}
  \theoremstyle{remark}
  \newtheorem*{acknowledgement*}{\protect\acknowledgementname}
  \providecommand{\acknowledgementname}{Acknowledgement}
  \providecommand{\definitionname}{Definition}
  \providecommand{\lemmaname}{Lemma}
  \providecommand{\remarkname}{Remark}
\providecommand{\theoremname}{Theorem}
\begin{document}

\title{Stability and error estimates of a general modified quasi-boundary
value method via a semi-linear backward parabolic equation}

\author{Vo Anh Khoa}

\address{Mathematics and Computer Science Division, Gran Sasso Science Institute,
Viale Francesco Crispi 7, 67100 L'Aquila, Italy}

\email{khoa.vo@gssi.infn.it, vakhoa.hcmus@gmail.com}

\subjclass[2000]{35K05, 47A52, 62G08.}

\keywords{Modified quasi-boundary value method, Backward problems, Ill-posedness,
Truncation, Stability, Error estimate.}
\begin{abstract}
Regularization methods have been recently developed to construct stable
approximate solutions to classical partial differential equations
considered as final value problems. In this paper, we investigate
the backward parabolic problem with locally Lipschitz source: $\partial_{t}u+\mu\left(t\right)\mathcal{A}u\left(t\right)=f\left(t,u\right)$
where $\mathcal{A}:\mathcal{D}\left(\mathcal{A}\right)\subset\mathcal{H}\to\mathcal{H}$
is a positive, self-adjoint and unbounded linear operator on the Hilbert
space $\mathcal{H}$. The problem arises in many applications, but
it is in general ill-posed. The ill-posedness is caused by catastrophic
growth in the representation of solution, then independence of solution
on data makes computational procedures impossible. Therefore, we contribute
to this interesting field the study of the stable approximation solution
via the modified quasi-boundary value method based on nonlinear spectral
theory, which slightly extends the results in many works. Our main
focus is to qualitatively prove the stability and convergence rate
where semi-group technicalities are highly useful.
\end{abstract}

\maketitle

\section{Introduction}

Let $\mathcal{H}$ be an abstract Hilbert space endowed with the inner
product $\left\langle \cdot,\cdot\right\rangle $ and the norm $\left\Vert \cdot\right\Vert $.
Let $\mathcal{A}:\mathcal{D}\left(\mathcal{A}\right)\subset\mathcal{H}\to\mathcal{H}$
be a positive-definite self-adjoint unbounded linear operator such
that $-\mathcal{A}$ generates a compact contraction semi-group on
$\mathcal{H}$ and let $f:\left[0,T\right]\times\mathcal{H}\to\mathcal{H}$
for $T>0$. We consider the problem of finding a function $u:\left[0,T\right]\to\mathcal{H}$
such that
\begin{equation}
\begin{cases}
\partial_{t}u+\mu\left(t\right)\mathcal{A}u\left(t\right)=f\left(t,u\right), & t\in\left(0,T\right),\\
u\left(T\right)=u_{T}.
\end{cases}\label{eq:1}
\end{equation}

Our motivation is involved in many models where $\mathcal{A}=-\Delta$
is considered. For instance, taking $R\left(u\right)=u-u^{2}$ gives
us the Fisher equation \cite{Fis37} that was originally used to
describe the spreading of biological populations of diploid individuals
living in a one-dimensional habitat while interpreting the reaction
term as due to true births and deaths. One may also deduce another
equations: the Newell-Whitehead-Segel equation \cite{NW69} with $f\left(u\right)=u-u^{3}$
describes Rayleigh-Benard convection, and the Zeldovich equation \cite{Zel48}
$f\left(u\right)=u^{2}-u^{3}$ performing the temperature in combustion
theory. Additionally, it can be generalized into the Nagumo equation
\cite{NYA65} with $f\left(u\right)=u\left(1-u\right)\left(u-C\right)$
for $C\in\left(0,1\right)$ modeling the transmission of electrical
pulses in a nerve axon.

In principle, the problem \eqref{eq:1} is well known to be severely
ill-posed in the sense of Hadamard \cite{Ha23}, which could be said
that the solution (if it exists) does not depend continuously on the
data in any reasonable topology. Therefore, we would like to contribute
a stable approximation approach to deal with that obstacle.

In the past, many approaches have been studied for solving various
kinds of the PDE in \eqref{eq:1} and related problems, see \cite{BBC85,BR07,HDS08,HL14,HL15,LL67,Sei96,TQTT13-1,TQTT13,Ya99}
and the references therein. We notably mention that over the last
20 years or so, the modified quasi-boundary method has developed step
by step to maturity, allowing effective treatment of a broad range
of linear problems backward in time. In fact, in the literature of
such a method, it should be starting from the original establishment
by Showalter \emph{et al.} \cite{Show83}. In that work, he introduced
a way named quasi-boundary-value method to approximate the problem
\[
\begin{cases}
\partial_{t}u+\mathcal{A}u\left(t\right)-\mathcal{B}u\left(t\right)=0, & t\in\left(0,T\right),\\
u\left(0\right)=u_{0},
\end{cases}
\]

by adding the final condition coupled with a factoring parameter $\alpha>0$
small enough (i.e. $u\left(0\right)+\alpha u\left(T\right)=u_{0}$).

Inheriting from that idea, Clark and Oppenheimer \cite{CO94} in 1994
perturbed the final condition $\alpha u\left(0\right)+u\left(T\right)=u_{T}$
to approximate the backward problem
\[
\begin{cases}
\partial_{t}u+\mathcal{A}u\left(t\right)=0, & t\in\left(0,T\right),\\
u\left(T\right)=u_{T}.
\end{cases}
\]

As a result, they obtained the well-posedness of the approximate nonlocal
problem. Particularly, it can be proved that the stable approximate
solution converges uniformly in time to the classical solution. There
is also a well-known approach of perturbing the final observation
in such a way that $u\left(T\right)-\alpha u'\left(0\right)=u_{T}$,
proposed in \cite{DB05} by Denche and Bessila within the framework
of using a variant of that method to give an error estimate of logarithmic
type at $t=0$. In general, that result is better than the quasi-reversibility
method.

In the literature on the non-homogeneous case, Denche \emph{et al.}
\cite{DA12} gave extensions of the quasi-boundary methods while showing
several sharp error estimates including explicit convergence rates.
In particular, the authors constructed the stable approximate problem
that generally perturbs both the source term $f$ and the final value
$u_{T}$ by means of
\[
f_{\alpha}=\sum_{p=1}^{\infty}\frac{e^{-\lambda_{p}T}}{\alpha\lambda_{p}^{k}+e^{-\lambda_{p}T}}\left\langle f,\varphi_{p}\right\rangle \varphi_{p},\quad u_{T}^{\alpha}=\sum_{p=1}^{\infty}\frac{e^{-\lambda_{p}T}}{\alpha\lambda_{p}^{k}+e^{-\lambda_{p}T}}\left\langle u_{T},\varphi_{p}\right\rangle \varphi_{p},
\]

where $\left\{ \varphi_{p}\right\} _{p\in\mathbb{N}}$ is the set
of orthonormal basis generated by the operator $\mathcal{A}$. Then
each assumption on the data leads to a specific error estimate. For
example, if $\left\Vert \mathcal{A}^{s}u_{T}\right\Vert ^{2}<\infty$
holds, the convergence rate is of order $\mathcal{O}\left(\ln^{s}\left(T/s\alpha\right)\right)$.

The consideration of the specific problems in the non-homogeneous
case is also in conjunction with the backward heat problem. Since
2008, the authors Trong and Tuan in \cite{TT08} applied the method
(generalized by Denche \emph{et al.} \cite{DA12} in 2012) to solve
the problem,
\[
\begin{cases}
u_{t}-u_{xx}=f\left(x,t\right), & \left(x,t\right)\in\left(0,\pi\right)\times\left(0,T\right),\\
u\left(x,T\right)=g\left(x\right), & x\in\left(0,\pi\right),\\
u\left(0,t\right)=u\left(\pi,t\right)=0, & t\in\left(0,T\right).
\end{cases}
\]

It thus comes out the approximate problem
\[
\begin{cases}
u_{t}-u_{xx}=\sum_{p=1}^{\infty}\frac{e^{-Tp^{2}}}{\varepsilon p^{2}+e^{-Tp^{2}}}f_{p}\left(t\right)\sin\left(px\right), & \left(x,t\right)\in\left(0,\pi\right)\times\left(0,T\right),\\
u\left(x,T\right)=\sum_{p=1}^{\infty}\frac{e^{-Tp^{2}}}{\varepsilon p^{2}+e^{-Tp^{2}}}g_{p}\sin\left(px\right), & x\in\left(0,\pi\right),\\
u\left(0,t\right)=u\left(\pi,t\right)=0, & t\in\left(0,T\right),
\end{cases}
\]

where $\varepsilon\in\left(0,1\right)$ represents the error induced
by measurement, and $f_{p},g_{p}$ are, respectively, defined by the
inner product between $f,g$ and $\sin\left(px\right)$ over $L^{2}\left(0,\pi\right)$.

The development of regularization methods has also set the stage for
addressing more difficult problems with more challenging features.
As far as we know, the regularization approaches concerning problems
which contain nonlinear source term are, up to now, still limited
though that problem has a great position in many types of application
(\cite{LMKS93,ZJS11}). In recent works, this theme has been tackled
with source terms where they are globally Lipschitzian (\emph{e.g.}
\cite{TQ11}) or locally Lipschitzian with a special form (Tuan \emph{et
al.} \cite{TT14}). Moreover, we observe that most of the previous
works in this field did not take care of the time-dependent diffusion
much, especially our problem \eqref{eq:1}. To our knowledge, the
literature on such a problem is eventually narrow, and as might be
expected, only two papers currently fulfilled related problems (see,
\cite{TQTT13-1,TQTT13}). Motivated by those things, it is thus natural
to raise the following question. Whether the modified quasi-boundary
method based on \cite{DA12} to \eqref{eq:1} still works? As a result,
this paper marks an extensive foray towards the development of numerical
methods based on nonlinear spectral theory. Hence, the purpose of
this generalization here is to attain general degrees of convergence
while claiming that the method is successfully applicable to our problem.

The rest of the paper is organized as follows. At first, for ease
of the reading, we summarize in Section 2 some rudiments in semi-group
of operators and elementary results. In addition, we present the formal
solution of \eqref{eq:1} leading to the state of ill-posedness in
accordance with the construction of the regularized solution. We then
obtain in Section 3 well-posedness and convergence result. In the
last section, some concluding remarks are mentioned.

\section{Preliminaries}

We below present the functional settings together with notations,
which will be used in this paper. Afterwards, we derive the representation
of exact solution, and construct the regularized solution from those
materials. Such elementary materials and further details can be found
in \cite{BR07,Pazy83}. Therefore, we only list them and the results
are presented while skipping the proofs.

We denote by $\left\{ E_{\lambda},\lambda>0\right\} $ the spectral
resolution of the identify associated to $\mathcal{A}$. Let us denote
\[
\mathcal{S}\left(t\right)=e^{-t\mathcal{A}}=\int_{0}^{\infty}e^{-t\lambda}dE_{\lambda}\in\mathcal{L}\left(\mathcal{H}\right),\quad t\ge0,
\]

the $C_{0}$ semi-group of contractions generated by $-\mathcal{A}$
where $\mathcal{L}\left(\mathcal{H}\right)$ stands for the Banach
algebra of bounded linear operators on $\mathcal{H}$. Then, we define
\begin{equation}
\mathcal{A}u:=\int_{0}^{\infty}\lambda dE_{\lambda}u,\label{eq:2}
\end{equation}

for all $u\in\mathcal{D}\left(\mathcal{A}\right)$. In this connection,
$u\in\mathcal{D}\left(\mathcal{A}\right)$ if and only if the integral
\eqref{eq:2} exists, i.e.,
\[
\int_{0}^{\infty}\lambda^{2}d\left\Vert E_{\lambda}u\right\Vert ^{2}<\infty.
\]

For this family of operators $\left\{ \mathcal{S}\left(t\right)\right\} _{t\ge0}$,
we have:
\begin{enumerate}
\item $\left\Vert \mathcal{S}\left(t\right)\right\Vert \le1$ for all $t\ge0$;
\item the function $t\mapsto\mathcal{S}\left(t\right),t\ge0$ is analytic;
\item for every real $r\ge0$ and $t>0$, the operator $\mathcal{S}\left(t\right)\in\mathcal{L}\left(\mathcal{H},\mathcal{D}\left(\mathcal{A}^{r}\right)\right)$; 
\item for every integer $k\ge0$ and $t>0$, $\left\Vert \mathcal{S}^{k}\left(t\right)\right\Vert =\left\Vert \mathcal{A}^{k}\mathcal{S}\left(t\right)\right\Vert \le c\left(k\right)t^{-k}$;
\item for every $x\in\mathcal{D}\left(\mathcal{A}^{r}\right),r\ge0$, we
have $\mathcal{S}\left(t\right)\mathcal{A}^{r}x=\mathcal{A}^{r}\mathcal{S}\left(t\right)x$.\end{enumerate}
\begin{defn}
\label{def:1}Let $f:\mathbb{R}\to\mathcal{H}$ be a piecewise continuous
function, we set
\[
\mathcal{D}\left(f\left(\mathcal{A}\right)\right):=\left\{ u\in\mathcal{H}:\int_{0}^{\infty}\left|f\left(\lambda\right)\right|^{2}d\left\Vert E_{\lambda}u\right\Vert ^{2}<\infty\right\} ,
\]

and define the operator $f\left(\mathcal{A}\right):\mathcal{D}\left(\mathcal{A}\right)\subset\mathcal{H}\to\mathcal{H}$
by the formula
\[
f\left(\mathcal{A}\right)u:=\int_{0}^{\infty}f\left(\lambda\right)dE_{\lambda}u,
\]

for all $u\in\mathcal{D}\left(f\left(\mathcal{A}\right)\right)$.
\end{defn}
In this work, we assume:

$\left(\mbox{A}_{1}\right)$ $u_{T}\in\mathcal{H}$ the given final
state of observation;

$\left(\mbox{A}_{2}\right)$ $\mu:\left[0,T\right]\to\mathbb{R}$
is differentiable, plays a role as time-dependent function such that
there exists $0<p\le q$, $\mu\left(t\right)\in\left[p,q\right]$
for all $t\in\left[0,T\right]$;

$\left(\mbox{A}_{3}\right)$ there exists a function, denoted by $u_{T}^{\delta}$,
depending on $\mathcal{H}$ satisfies $\left\Vert u_{T}^{\delta}-u_{T}\right\Vert \le\delta$
for $\delta\in\left(0,1\right)$ representing a bound on measurement
errors;

$\left(\mbox{A}_{4}\right)$ there is a positive constant $C_{R}<\infty$
defined by
\[
C_{R}:=\sup\left\{ \left|\frac{f\left(t,u\right)-f\left(t,v\right)}{u-v}\right|:\left|u\right|,\left|v\right|\le R,u\ne v,t\in\left[0,T\right]\right\} ,
\]

then $C_{R}$ is a non-decreasing function of $R$ and
\[
\left|f\left(t,u\right)-f\left(t,v\right)\right|\le C_{R}\left|u-v\right|,
\]

for every $R>0$, $\left|u\right|,\left|v\right|\le R$ and $t\in\left[0,T\right]$.

Now we are ready to state the representation of exact solution of
\eqref{eq:1}. Assume that the problem \eqref{eq:1} admits a unique
solution, then this solution can be represented by
\begin{equation}
u\left(t\right)=\mathcal{S}^{-1}\left(\bar{\mu}\left(t,T\right)\right)u_{T}-\int_{t}^{T}\mathcal{S}\left(\bar{\mu}\left(s,t\right)\right)f\left(s,u\right)ds,\label{eq:3}
\end{equation}

where the function $\bar{\mu}$ is defined by
\[
\bar{\mu}\left(a,b\right):=\int_{a}^{b}\mu\left(s\right)ds,\quad0\le a\le b\le T.
\]

\begin{rem}
The inverse operator $\mathcal{S}^{-1}\left(t\right)$ of $\mathcal{S}\left(t\right)$
can be defined for every $t>0$ since $\mathcal{S}\left(t\right)$
is self-adjoint and one-to-one with range dense, i.e. $\mathcal{S}\left(t\right)=\mathcal{S}^{*}\left(t\right),\ker\left(\mathcal{S}\left(t\right)\right)=\left\{ 0\right\} $
and $\overline{\mathcal{R}\left(\mathcal{S}\left(t\right)\right)}=\mathcal{H}$.
Thus, the inverse operator is $\mathcal{S}^{-1}\left(t\right)=e^{t\mathcal{A}}$.
Furthermore, since the semi-group $\mathcal{S}\left(t\right)$ is
analytic with the following property
\[
\mathcal{R}\left(\mathcal{S}\left(t\right)\right)\subset\bigcap_{n=1}^{\infty}\mathcal{D}\left(\mathcal{A}^{n}\right)\subset\mathcal{H},\quad t\in\left(0,T\right],
\]

and coupling with $\mathcal{R}\left(\mathcal{S}\left(t\right)\right)\ne\overline{\mathcal{R}\left(\mathcal{S}\left(t\right)\right)}=\mathcal{H}$,
we state that $\mathcal{R}\left(\mathcal{S}\left(t\right)\right)$
is not closed. Therefore, the inverse operator is unbounded in $\mathcal{H}$
for $t\in\left(0,T\right]$. As a consequence, let us say that $\mathcal{S}^{-1}\left(\bar{\mu}\left(t,T\right)\right)=e^{\bar{\mu}\left(t,T\right)\mathcal{A}}$
and $\mathcal{S}\left(\bar{\mu}\left(s,t\right)\right)=e^{-\bar{\mu}\left(s,t\right)\mathcal{A}}$,
then the representation of solution \eqref{eq:3} becomes
\begin{eqnarray}
u\left(t\right) & = & e^{\bar{\mu}\left(t,T\right)\mathcal{A}}u_{T}-\int_{t}^{T}e^{-\bar{\mu}\left(s,t\right)\mathcal{A}}f\left(s\right)ds\nonumber \\
 & = & \int_{0}^{\infty}e^{\bar{\mu}\left(t,T\right)\lambda}dE_{\lambda}u_{T}-\int_{t}^{T}\int_{0}^{\infty}e^{\bar{\mu}\left(t,s\right)\lambda}dE_{\lambda}f\left(s,u\right)ds.\label{eq:4}
\end{eqnarray}

Now we can see that the rapid escalation of exponential functions,
$e^{\bar{\mu}\left(t,T\right)\lambda}$ and $e^{\bar{\mu}\left(t,s\right)\lambda}$
for $t\le s\le T$, causes the ill-posedness of the problem, then
performing classical calculation is definitely impossible. Therefore,
contributing a regularization method is necessary to make the numerical
computation possible.
\end{rem}
On the one hand, for $R>0$, we define the truncated function $f_{R}$
such that
\begin{equation}
f_{R}\left(t,u\right):=\begin{cases}
f\left(t,R\right), & u>R,\\
f\left(t,u\right), & \left|u\right|\le R,\\
f\left(t,-R\right), & u<-R.
\end{cases}\label{eq:5}
\end{equation}

Then $f_{R}\in L^{\infty}\left(\left[0,T\right]\times\mathbb{R}\right)$
and it satisfies
\begin{equation}
\left|f_{R}\left(t,u\right)-f_{R}\left(t,v\right)\right|\le C_{R}\left|u-v\right|,\label{eq:2.5}
\end{equation}

for all $u,v\in\mathbb{R}$ and $t\in\left[0,T\right]$.

On the other hand, we introduce a filtering function such that for
each $\delta>0$ and given $k\ge1$ then $\Phi_{\bar{\mu},\lambda}^{\delta}:\left[0,T\right]\times\left[0,T\right]\to\mathbb{R}$
satisfies
\begin{equation}
\Phi_{\bar{\mu},\lambda}^{\delta}\left(s,t\right):=\frac{e^{\left(\bar{\mu}\left(t,s\right)-\bar{\mu}\left(0,T\right)\right)\lambda}}{\delta\lambda^{k}+e^{-\bar{\mu}\left(0,T\right)\lambda}},\quad0\le t\le s\le T.\label{eq:6}
\end{equation}

Combining \eqref{eq:5} and \eqref{eq:6}, and observe \eqref{eq:4},
the approximate solution, denoted by $u^{\delta}$, corresponding
to noise data can be deduced as follows:
\begin{equation}
u^{\delta}\left(t\right):=\int_{0}^{\infty}\Phi_{\bar{\mu},\lambda}^{\delta}\left(T,t\right)dE_{\lambda}u_{T}^{\delta}-\int_{t}^{T}\int_{0}^{\infty}\Phi_{\bar{\mu},\lambda}^{\delta}\left(s,t\right)dE_{\lambda}f_{R^{\delta}}\left(s,u^{\delta}\right)ds,\label{eq:7}
\end{equation}

for $0\le t\le T$. Here $R^{\delta}:=R\left(\delta\right)>0$ tends
to infinity as $\delta\to0^{+}$.

Now we shall prove some elementary inequalities to support our proofs
in Section 3.
\begin{lem}
\label{lem:1}Given $k\ge1$, let $\delta$ and $M$ be positive constants,
we have the following inequality
\[
\frac{1}{\delta x^{k}+e^{-Mx}}\le\frac{1}{\delta}\left(\frac{kM}{\ln\left(\frac{M^{k}}{k\delta}\right)}\right)^{k},\quad x>0.
\]
\end{lem}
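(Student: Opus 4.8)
The plan is to recast the claim as a uniform lower bound on the denominator. Writing $g(x):=\delta x^{k}+e^{-Mx}$ and $x_{*}:=\frac{1}{kM}\ln\!\big(\frac{M^{k}}{k\delta}\big)$, one checks immediately that the asserted right-hand side is exactly $1/(\delta x_{*}^{k})$, so it suffices to prove $g(x)\ge\delta x_{*}^{k}$ for every $x>0$. I work in the relevant regime $k\delta<M^{k}$, i.e. $\ln(M^{k}/(k\delta))>0$, which is precisely where the bound is meaningful and which holds for $\delta$ small (the case used in Section 3). It is worth noting that a naive split at $x_{*}$ — using $g\ge\delta x^{k}$ on $[x_{*},\infty)$ and $g\ge e^{-Mx}$ on $(0,x_{*})$ — settles the first region but is too lossy on the second; so instead I keep both terms and locate the global minimum of $g$.

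First I would record that $g$ is strictly convex on $(0,\infty)$, since $g''(x)=k(k-1)\delta x^{k-2}+M^{2}e^{-Mx}>0$ for $k\ge1$. As $g'(x)=k\delta x^{k-1}-Me^{-Mx}$ satisfies $g'(0^{+})<0$ (here the regime $\delta<M$ is what forces this when $k=1$) and $g'(x)\to+\infty$, the function $g$ has a unique interior minimizer $x_{m}$ determined by $g'(x_{m})=0$. Dropping the positive exponential term gives $g(x)\ge g(x_{m})\ge\delta x_{m}^{k}$, so the whole inequality reduces to showing $x_{m}\ge x_{*}$.

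Because $g'$ is increasing, $x_{m}\ge x_{*}$ is equivalent to $g'(x_{*})\le0$, that is, $k\delta x_{*}^{k-1}\le Me^{-Mx_{*}}$. The value $x_{*}$ is tailored so that $e^{-Mx_{*}}=(k\delta)^{1/k}/M$ has a clean closed form; substituting this together with $x_{*}=y/(kM)$, where $y:=\ln(M^{k}/(k\delta))$ and $k\delta=M^{k}e^{-y}$, collapses the required inequality to the single-variable statement
\[
\phi(y):=y^{k-1}e^{-(k-1)y/k}\le k^{k-1},\qquad y>0.
\]

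This scalar estimate is the only real obstacle, and it yields to elementary calculus: $\log\phi$ is concave and maximized at $y=k$, where $\phi(k)=(k/e)^{k-1}\le k^{k-1}$. Hence $g'(x_{*})\le0$, so $x_{m}\ge x_{*}$ and therefore $g(x)\ge\delta x_{m}^{k}\ge\delta x_{*}^{k}$ for all $x>0$, which is exactly the claimed bound $1/g(x)\le\frac{1}{\delta}\big(\frac{kM}{\ln(M^{k}/(k\delta))}\big)^{k}$. As a sanity check, for $k=1$ the computation degenerates nicely: $x_{*}=x_{m}$, the minimum is attained precisely at $x_{*}$, and one sees directly that the bound is essentially sharp in that case.
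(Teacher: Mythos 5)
Your proof is correct and takes essentially the same approach as the paper: both arguments locate the unique critical point of the denominator $\delta x^{k}+e^{-Mx}$, drop the exponential term there, and bound the minimizer from below by $x_{*}=\frac{1}{kM}\ln\left(\frac{M^{k}}{k\delta}\right)$ --- the paper gets this last lower bound by combining the critical equation $x_{0}^{k-1}e^{Mx_{0}}=\frac{M}{k\delta}$ with $e^{Mx_{0}}\ge Mx_{0}$, while you instead verify $g'(x_{*})\le0$ via the scalar estimate $\phi(y)=y^{k-1}e^{-(k-1)y/k}\le k^{k-1}$, an equivalent computation. One small merit of your write-up is that it makes explicit the standing assumption $k\delta<M^{k}$ (so that $\ln\left(\frac{M^{k}}{k\delta}\right)>0$), without which the stated bound is meaningless; the paper uses this tacitly.
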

\begin{proof}
Let us define $g\left(x\right)=\dfrac{1}{\delta x^{k}+e^{-Mx}}$,
then taking its derivative implies
\begin{equation}
g'\left(x\right)=\frac{Me^{-Mx}-\delta kx^{k-1}}{\left(\delta x^{k}+e^{-Mx}\right)^{2}}.\label{eq:15}
\end{equation}

The equation ${\displaystyle g'\left(x\right)=0}$ gives a unique
solution $x_{0}$ such that the numerator of the right-hand side of
\eqref{eq:15} vanishes at $x_{0}$, i.e. $x_{0}^{k-1}e^{Mx_{0}}=\dfrac{M}{k\delta}$.
The function $g$ thus achieves its maximum at $x=x_{0}$. It has
also been shown that $e^{-Mx_{0}}=\dfrac{k\delta}{M}x_{0}^{k-1}$,
so the function $g$ can be bounded by the following
\begin{equation}
g\left(x\right)\le\frac{1}{\delta x_{0}^{k}+e^{-Mx_{0}}}\le\frac{1}{\delta x_{0}^{k}+\frac{k\delta}{M}x_{0}^{k-1}}.\label{eq:2.8}
\end{equation}

On the other hand, one easily has by using the simple inequality $e^{Mx_{0}}\ge Mx_{0}$
that
\[
\frac{M}{k\delta}\le x_{0}^{k-1}e^{Mx_{0}}\le\frac{1}{M^{k-1}}e^{\left(k-1\right)Mx_{0}}e^{Mx_{0}}\le\frac{1}{M^{k-1}}e^{kMx_{0}},
\]

which leads to
\begin{equation}
x_{0}\ge\frac{1}{kM}\ln\left(\frac{M^{k}}{k\delta}\right).\label{eq:2.9}
\end{equation}

Combining \eqref{eq:2.8} and \eqref{eq:2.9}, we obtain
\[
g\left(x\right)\le\frac{1}{\delta x_{0}^{k}}\le\frac{1}{\delta}\left(\frac{kM}{\ln\left(\frac{M^{k}}{k\delta}\right)}\right)^{k}.
\]
\end{proof}
\begin{lem}
\label{lem:2}Given $k\ge1$, let $\delta$ and $M$ be two positive
constants. Then for $0\le a\le M$, the following inequality holds:
\[
\frac{e^{-ax}}{\delta x^{k}+e^{-Mx}}\le\left(kM\right)^{k\left(1-\frac{a}{M}\right)}\delta^{\frac{a}{M}-1}\left(\ln\left(\frac{M^{k}}{k\delta}\right)\right)^{k\left(\frac{a}{M}-1\right)},\quad x>0.
\]
\end{lem}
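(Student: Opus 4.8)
The plan is to reduce everything to Lemma~\ref{lem:1} by absorbing the numerator $e^{-ax}$ into the denominator through a single power estimate. The key observation is that, because $0\le a\le M$, the exponent $\theta:=a/M$ lies in $[0,1]$, and we may rewrite $e^{-ax}=\left(e^{-Mx}\right)^{\theta}$. This is the whole idea; what remains is bookkeeping of which direction each monotonicity runs.

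First I would bound the numerator from above by a power of the full denominator. Since $e^{-Mx}\le\delta x^{k}+e^{-Mx}$ and the map $t\mapsto t^{\theta}$ is non-decreasing for $\theta\ge0$, we obtain $e^{-ax}=\left(e^{-Mx}\right)^{\theta}\le\left(\delta x^{k}+e^{-Mx}\right)^{\theta}$. Dividing both sides by $\delta x^{k}+e^{-Mx}$ then gives
\[
\frac{e^{-ax}}{\delta x^{k}+e^{-Mx}}\le\left(\delta x^{k}+e^{-Mx}\right)^{\theta-1},
\]
and since $\theta-1\le0$ the right-hand side is precisely $\left(\delta x^{k}+e^{-Mx}\right)^{-(1-\theta)}$, i.e. a nonnegative power of the reciprocal of the denominator.

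Next I would invoke Lemma~\ref{lem:1}, which furnishes the upper bound $\left(\delta x^{k}+e^{-Mx}\right)^{-1}\le\delta^{-1}\left(kM/\ln(M^{k}/(k\delta))\right)^{k}$. Because $1-\theta\ge0$, raising this inequality to the power $1-\theta$ preserves its direction, yielding
\[
\left(\delta x^{k}+e^{-Mx}\right)^{-(1-\theta)}\le\delta^{-(1-\theta)}\left(kM\right)^{k(1-\theta)}\left(\ln\left(\frac{M^{k}}{k\delta}\right)\right)^{-k(1-\theta)}.
\]
Substituting back $\theta=a/M$ turns the three exponents $-(1-\theta)$, $k(1-\theta)$, $-k(1-\theta)$ into $a/M-1$, $k(1-a/M)$, $k(a/M-1)$ respectively, which is exactly the asserted bound.

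The argument has essentially no hard step; the only points requiring genuine care are the monotonicity directions. One must check that $\theta\ge0$ is what licenses $e^{-ax}\le(\delta x^{k}+e^{-Mx})^{\theta}$, while $1-\theta\ge0$ is what licenses raising Lemma~\ref{lem:1} to that power without reversing the inequality; both facts follow from the hypothesis $0\le a\le M$. As a sanity check I would verify the two boundary cases: $a=0$ (so $\theta=0$) recovers Lemma~\ref{lem:1} verbatim, while $a=M$ (so $\theta=1$) reduces to the trivial estimate $e^{-Mx}/(\delta x^{k}+e^{-Mx})\le1$, matching the right-hand side, whose exponents all vanish and which therefore collapses to $1$.
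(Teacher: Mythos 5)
Your proposal is correct and follows essentially the same route as the paper: the paper factors the denominator as $\left(\delta x^{k}+e^{-Mx}\right)^{a/M}\left(\delta x^{k}+e^{-Mx}\right)^{1-a/M}$ and cancels $e^{-ax}\le\left(\delta x^{k}+e^{-Mx}\right)^{a/M}$ against the first factor, which is just a rearrangement of your step of bounding the numerator and dividing, and both arguments then finish by raising the bound of Lemma~\ref{lem:1} to the power $1-a/M$. Your explicit attention to the monotonicity directions and the boundary checks $a=0$, $a=M$ are sound additions, but the mathematical content is identical to the paper's proof.
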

\begin{proof}
Applying Lemma \ref{lem:1} and direct computations, we obtain
\begin{eqnarray*}
\frac{e^{-ax}}{\delta x^{k}+e^{-Mx}} & = & \frac{e^{-ax}}{\left(\delta x^{k}+e^{-Mx}\right)^{\frac{a}{M}}\left(\delta x^{k}+e^{-Mx}\right)^{1-\frac{a}{M}}}\\
 & \le & \frac{1}{\left(\delta x^{k}+e^{-Mx}\right)^{1-\frac{a}{M}}}\\
 & \le & \left[\left(kM\right)^{k}\delta^{-1}\left(\ln\left(\frac{M^{k}}{k\delta}\right)^{-k}\right)\right]^{1-\frac{a}{M}}\\
 & \le & \left(kM\right)^{k\left(1-\frac{a}{M}\right)}\delta^{\frac{a}{M}-1}\left(\ln\left(\frac{M^{k}}{k\delta}\right)\right)^{k\left(\frac{a}{M}-1\right)}.
\end{eqnarray*}
\end{proof}
\begin{lem}
\label{lem:3}For $\delta>0,k\ge1$, the filtering function $\Phi_{\bar{\mu},\lambda}^{\delta}$
given by \eqref{eq:6} can be bounded by
\[
\Phi_{\bar{\mu},\lambda}^{\delta}\left(s,t\right)\le\left(kTq\right)^{\frac{kp\left(s-t\right)}{qT}}\delta^{\frac{p\left(t-s\right)}{qT}}\left(\ln\left(\frac{\left(Tq\right)^{k}}{k\delta}\right)\right)^{\frac{-kp\left(s-t\right)}{qT}},\quad0\le t\le s\le T.
\]

Moreover, it leads to
\[
\Phi_{\bar{\mu},\lambda}^{\delta}\left(T,t\right)\le\left(kTq\right)^{\frac{kp\left(T-t\right)}{qT}}\delta^{\frac{p\left(t-T\right)}{qT}}\left(\ln\left(\frac{\left(Tq\right)^{k}}{k\delta}\right)\right)^{\frac{-kp\left(T-t\right)}{qT}},\quad0\le t\le T.
\]
\end{lem}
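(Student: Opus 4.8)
The plan is to recognise that the filtering function \eqref{eq:6} is precisely of the form estimated in Lemma \ref{lem:2}, and then to convert the resulting $\bar{\mu}$-dependent bound into one depending only on $p,q,T$ by means of $\left(\mathrm{A}_{2}\right)$. First I would match the denominators: the denominator of $\Phi_{\bar{\mu},\lambda}^{\delta}\left(s,t\right)$ is $\delta\lambda^{k}+e^{-\bar{\mu}\left(0,T\right)\lambda}$, which forces the choice $M=\bar{\mu}\left(0,T\right)$ and $x=\lambda$ in Lemma \ref{lem:2}. The numerator $e^{\left(\bar{\mu}\left(t,s\right)-\bar{\mu}\left(0,T\right)\right)\lambda}$ is then written as $e^{-a\lambda}$ with $a:=\bar{\mu}\left(0,T\right)-\bar{\mu}\left(t,s\right)$. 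Before invoking the lemma I would verify the admissibility $0\le a\le M$: since $0\le t\le s\le T$ and $\mu\ge p>0$, we have $0\le\bar{\mu}\left(t,s\right)\le\bar{\mu}\left(0,T\right)$, whence $0\le a\le M$ as required.

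Applying Lemma \ref{lem:2} with these choices and using the algebraic identity $1-\tfrac{a}{M}=\tfrac{\bar{\mu}\left(t,s\right)}{\bar{\mu}\left(0,T\right)}$, all three exponents become $\pm k$ or $\pm 1$ times a single ratio, so the estimate collapses to
\[
\Phi_{\bar{\mu},\lambda}^{\delta}\left(s,t\right)\le\left[\frac{1}{\delta}\left(\frac{kM}{\ln\left(M^{k}/(k\delta)\right)}\right)^{k}\right]^{\rho},\qquad\rho:=\frac{\bar{\mu}\left(t,s\right)}{\bar{\mu}\left(0,T\right)}\in\left[0,1\right].
\]
The bracket here is exactly the quantity controlled in Lemma \ref{lem:1}, so it exceeds $1$; this observation is what governs the direction in which each factor may be replaced.

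It then remains to eliminate $\bar{\mu}$ in favour of $p,q,T$. The elementary estimates from $\left(\mathrm{A}_{2}\right)$ are $\bar{\mu}\left(t,s\right)=\int_{t}^{s}\mu\ge p\left(s-t\right)$ and $\bar{\mu}\left(0,T\right)=\int_{0}^{T}\mu\le qT$, which give $\rho\ge\tfrac{p\left(s-t\right)}{qT}$, together with $M\le qT$ in the base $kM$ and in the argument of the logarithm. Feeding these into the three factors reproduces the claimed bound, with exponent $\tfrac{kp\left(s-t\right)}{qT}$ on $kTq$, exponent $\tfrac{p\left(t-s\right)}{qT}$ on $\delta$, and exponent $\tfrac{-kp\left(s-t\right)}{qT}$ on the logarithm. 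The ``moreover'' assertion is then immediate: putting $s=T$ one has $\bar{\mu}\left(t,T\right)-\bar{\mu}\left(0,T\right)=-\bar{\mu}\left(0,t\right)$, so $\Phi_{\bar{\mu},\lambda}^{\delta}\left(T,t\right)$ is simply the case $s=T$ of the first estimate, giving the stated formula with $s-t$ replaced by $T-t$.

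I expect the delicate step to be this last substitution. One must track carefully the monotonicity of each factor: the base $kM$ and the whole bracket lie above $1$, whereas $\delta\in\left(0,1\right)$ makes a negative power of $\delta$ and a negative power of the logarithm respond oppositely as $\rho$ varies. Consequently the replacement of the exact ratio $\rho$ by $p\left(s-t\right)/\left(qT\right)$ and of $M$ by $qT$ must be organised so that every factor is estimated in the correct direction; this is where I would concentrate the argument, since a hasty term-by-term replacement can reverse one of the inequalities.
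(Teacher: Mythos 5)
Your first two steps are the careful version of what the paper does in a single line (the paper simply substitutes $x=\lambda$, $a=qT-p\left(s-t\right)$, $M=Tq$ into Lemma \ref{lem:2}): you instead match the true parameters $M=\bar{\mu}\left(0,T\right)$, $a=\bar{\mu}\left(0,T\right)-\bar{\mu}\left(t,s\right)$, verify $0\le a\le M$, and obtain $\Phi_{\bar{\mu},\lambda}^{\delta}\left(s,t\right)\le B^{\rho}$ with $B:=\left(kM\right)^{k}\delta^{-1}\left(\ln\left(M^{k}/\left(k\delta\right)\right)\right)^{-k}\ge1$ and $\rho:=\bar{\mu}\left(t,s\right)/\bar{\mu}\left(0,T\right)$; all of this is correct. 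The genuine gap is exactly at the step you flag as delicate and then assert anyway: ``feeding these into the three factors reproduces the claimed bound'' is false, and no reorganisation of the term-by-term replacements can fix it. Since $B\ge1$, replacing $\rho$ by the \emph{lower} bound $p\left(s-t\right)/\left(qT\right)$ \emph{decreases} $B^{\rho}$; the dominant factor $\delta^{-\rho}$ is increasing in $\rho$, so an upper bound requires an upper bound on $\rho$, namely $\rho\le\min\left\{ 1,q\left(s-t\right)/\left(pT\right)\right\} $, which yields different exponents from the stated ones. Likewise, replacing $M$ by $qT\ge M$ inside the logarithm enlarges the logarithm, and under the negative power $-k\rho$ this also moves in the wrong direction (there one needs $pT\le M$). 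In fact the stated inequality is false whenever $p<q$: at $t=0$, $s=T$ the numerator of $\Phi_{\bar{\mu},\lambda}^{\delta}$ is identically $1$, so $\sup_{\lambda>0}\Phi_{\bar{\mu},\lambda}^{\delta}\left(T,0\right)$ is of order $\delta^{-1}\left(\ln\left(1/\delta\right)\right)^{-k}$ (Lemma \ref{lem:1} is essentially sharp at its maximiser $x_{0}$), which for small $\delta$ dwarfs the claimed $\left(kTq\right)^{kp/q}\delta^{-p/q}\left(\ln\left(\left(Tq\right)^{k}/\left(k\delta\right)\right)\right)^{-kp/q}$; concretely, $k=T=q=1$, $p=1/2$, $\mu\equiv1$, $\lambda=\ln\left(1/\delta\right)$ gives $\Phi=\left(\delta\ln\left(1/\delta\right)+\delta\right)^{-1}\approx\delta^{-1}/\ln\left(1/\delta\right)$, violating the bound $\delta^{-1/2}\left(\ln\left(1/\delta\right)\right)^{-1/2}$.

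You are in good company: the paper's own one-line proof commits precisely the directional error you worried about. Its substitution $a=qT-p\left(s-t\right)$ requires the numerator comparison $e^{\left(\bar{\mu}\left(t,s\right)-\bar{\mu}\left(0,T\right)\right)\lambda}\le e^{-\left(qT-p\left(s-t\right)\right)\lambda}$, i.e.\ $\bar{\mu}\left(0,t\right)+\bar{\mu}\left(s,T\right)\ge qT-p\left(s-t\right)$, whereas $\left(\mathrm{A}_{2}\right)$ yields the reverse, $\bar{\mu}\left(0,t\right)+\bar{\mu}\left(s,T\right)\le qT-q\left(s-t\right)\le qT-p\left(s-t\right)$, with equality only when $p=q$. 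So your plan is sound up to and including the application of Lemma \ref{lem:2}, and your closing caveat identifies the fatal point correctly; but the lemma as stated is provable only in the constant-coefficient case $p=q$ (where $\rho=\left(s-t\right)/T$ exactly and the claim \emph{is} Lemma \ref{lem:2}), while in general one can only conclude a weaker estimate with $\delta$-exponent $-\min\left\{ 1,q\left(s-t\right)/\left(pT\right)\right\} $ and logarithm $\ln\left(\left(pT\right)^{k}/\left(k\delta\right)\right)$. Your proposal does not (and cannot) bridge that difference, and the downstream stability and convergence rates in Theorems \ref{thm:stable}--\ref{thm:conv2}, which inherit these exponents, would need the same correction.
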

\begin{proof}
The proof is straightforward due to the assumption $\left(\mbox{A}_{2}\right)$
and replacing $x=\lambda,a=qT-p\left(s-t\right)$ and $M=Tq$ in Lemma
\ref{lem:2}.\end{proof}
\begin{rem}
We now turn to introduce the abstract Gevrey class of functions of
order $s>0$ and index $\sigma>0$, denoted by $\mathbb{G}_{\sigma}^{s}$,
(see, \emph{e.g.}, \cite{CRT99}) characterized as
\[
\mathbb{G}_{\sigma}^{s}:=\left\{ u\in\mathcal{H}:\int_{0}^{\infty}\lambda^{s}e^{2\sigma\lambda}d\left\Vert E_{\lambda}u\right\Vert ^{2}<\infty\right\} ,
\]

equipped with the norm
\[
\left\Vert u\right\Vert _{\mathbb{G}_{\sigma}^{s}}^{2}:=\int_{0}^{\infty}\lambda^{s}e^{2\sigma\lambda}d\left\Vert E_{\lambda}u\right\Vert ^{2}<\infty.
\]

Let us remind that the focusing point in this paper lies in considering
the Laplacian, i.e. $\mathcal{A}=-\Delta$. Here, the abstract Gevrey
class $\mathbb{G}_{\sigma}^{s}$ is defined as the space $\mathcal{D}\left(\left(-\Delta\right)^{s/2}e^{\sigma\left(-\Delta\right)^{1/2}}\right)$,
and thus meaningful with the inner product
\[
\left\langle u,v\right\rangle _{\mathcal{D}\left(\left(-\Delta\right)^{s/2}e^{\sigma\left(-\Delta\right)^{1/2}}\right)}:=\left\langle \left(-\Delta\right)^{s/2}e^{\sigma\left(-\Delta\right)^{1/2}}u,\left(-\Delta\right)^{s/2}e^{\sigma\left(-\Delta\right)^{1/2}}v\right\rangle ,
\]

for $u,v\in\mathcal{D}\left(\left(-\Delta\right)^{s/2}e^{\sigma\left(-\Delta\right)^{1/2}}\right)$.
Therefore, $\mathbb{G}_{\sigma}^{s}$ is the Hilbert space. 
\end{rem}

\section{Main results}

Observe that the representation \eqref{eq:7} of regularized solution
is naturally an integral equation. So what we have to consider in
this section are well-posedness of the equation and convergence results.
In the former work, it is easy to prove the existence and uniqueness
of such an integral equation. In fact, let us remark that Lemma \ref{lem:3}
says the boundedness of the filtering function $\Phi_{\bar{\mu},\lambda}^{\delta}$,
then with the aid of the fact that $f_{R^{\delta}}$ is globally Lipschitz
by \eqref{eq:2.5} we obtain the result by the well-known Banach fixed-point
theorem. Thereby, we shall claim the lemma below without proof.
\begin{lem}
\label{lem:exist}The integral equation $u^{\delta}\left(t\right)=\mathcal{G}\left(u^{\delta}\right)\left(t\right)$
where $\mathcal{G}$ maps from $C\left(\left[0,T\right];\mathcal{H}\right)$
into itself such that
\begin{equation}
\mathcal{G}\left(u^{\delta}\right)\left(t\right)=\int_{0}^{\infty}\Phi_{\bar{\mu},\lambda}^{\delta}\left(T,t\right)dE_{\lambda}u_{T}^{\delta}-\int_{t}^{T}\int_{0}^{\infty}\Phi_{\bar{\mu},\lambda}^{\delta}\left(s,t\right)dE_{\lambda}f_{R^{\delta}}\left(s,u^{\delta}\right)ds,\label{eq:26}
\end{equation}

has a unique solution $u^{\delta}\in C\left(\left[0,T\right];\mathcal{H}\right)$
for every $\delta>0$.\end{lem}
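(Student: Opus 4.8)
The plan is to apply the Banach fixed-point theorem to the operator $\mathcal{G}$ on the Banach space $C\left(\left[0,T\right];\mathcal{H}\right)$ equipped with the supremum norm $\left\Vert u\right\Vert _{\infty}:=\sup_{0\le t\le T}\left\Vert u\left(t\right)\right\Vert $. Since $\mathcal{G}$ need not be a contraction outright, I would instead show that a sufficiently high iterate $\mathcal{G}^{n}$ contracts, which still yields a unique fixed point. First I would verify that $\mathcal{G}$ maps $C\left(\left[0,T\right];\mathcal{H}\right)$ into itself: the first term in \eqref{eq:26} is continuous in $t$ because $t\mapsto\Phi_{\bar{\mu},\lambda}^{\delta}\left(T,t\right)$ depends continuously on $t$ through $\bar{\mu}$, while the integral term is continuous in $t$ both through its lower limit and through the continuous $t$-dependence of the integrand; the boundedness supplied by Lemma \ref{lem:3} guarantees all integrals converge.

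The core estimate relies on the spectral theorem together with Lemma \ref{lem:3}. For fixed $\delta\in\left(0,1\right)$, the bound in Lemma \ref{lem:3} is independent of $\lambda$, so setting
\[
K_{\delta}:=\sup_{0\le t\le s\le T}\left(kTq\right)^{\frac{kp\left(s-t\right)}{qT}}\delta^{\frac{p\left(t-s\right)}{qT}}\left(\ln\left(\frac{\left(Tq\right)^{k}}{k\delta}\right)\right)^{\frac{-kp\left(s-t\right)}{qT}}<\infty,
\]
we obtain $\Phi_{\bar{\mu},\lambda}^{\delta}\left(s,t\right)\le K_{\delta}$ for all $\lambda>0$ and all $0\le t\le s\le T$. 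Consequently, for any $w\in\mathcal{H}$, the Plancherel-type identity for the spectral measure gives $\left\Vert \int_{0}^{\infty}\Phi_{\bar{\mu},\lambda}^{\delta}\left(s,t\right)dE_{\lambda}w\right\Vert \le K_{\delta}\left\Vert w\right\Vert $. Combining this with the global Lipschitz property \eqref{eq:2.5} of $f_{R^{\delta}}$, for $u,v\in C\left(\left[0,T\right];\mathcal{H}\right)$ I would estimate
\[
\left\Vert \mathcal{G}\left(u\right)\left(t\right)-\mathcal{G}\left(v\right)\left(t\right)\right\Vert \le K_{\delta}C_{R^{\delta}}\int_{t}^{T}\left\Vert u\left(s\right)-v\left(s\right)\right\Vert ds,
\]
which is the one-step contraction inequality.

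From here I would iterate. An induction on $n$, feeding the one-step bound into itself, produces
\[
\left\Vert \mathcal{G}^{n}\left(u\right)\left(t\right)-\mathcal{G}^{n}\left(v\right)\left(t\right)\right\Vert \le\frac{\left(K_{\delta}C_{R^{\delta}}\right)^{n}\left(T-t\right)^{n}}{n!}\left\Vert u-v\right\Vert _{\infty},
\]
so that $\left\Vert \mathcal{G}^{n}\left(u\right)-\mathcal{G}^{n}\left(v\right)\right\Vert _{\infty}\le\frac{\left(K_{\delta}C_{R^{\delta}}T\right)^{n}}{n!}\left\Vert u-v\right\Vert _{\infty}$. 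Since $\left(K_{\delta}C_{R^{\delta}}T\right)^{n}/n!\to0$ as $n\to\infty$, some iterate $\mathcal{G}^{n}$ is a strict contraction, and the generalized Banach fixed-point theorem yields a unique $u^{\delta}\in C\left(\left[0,T\right];\mathcal{H}\right)$ with $\mathcal{G}\left(u^{\delta}\right)=u^{\delta}$.

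The only genuinely delicate point is the finiteness of $K_{\delta}$: the exponent $p\left(t-s\right)/\left(qT\right)$ is nonpositive and $\delta<1$, so the factor $\delta^{p\left(t-s\right)/\left(qT\right)}$ grows as $s-t$ increases, being largest near $s-t=T$. I would therefore note that the supremum over the triangle $\left\{ 0\le t\le s\le T\right\} $ is attained at $s-t=T$ and remains finite for every fixed $\delta$ (blowing up only in the limit $\delta\to0^{+}$, which is immaterial for this existence-uniqueness statement). This confirms $K_{\delta}<\infty$ and closes the argument.
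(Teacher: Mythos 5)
Your proposal is correct and follows exactly the route the paper intends: the paper states this lemma without proof, remarking only that the boundedness of $\Phi_{\bar{\mu},\lambda}^{\delta}$ from Lemma \ref{lem:3} together with the global Lipschitz bound \eqref{eq:2.5} gives the result via the Banach fixed-point theorem, and your argument is precisely a complete instantiation of that sketch. Your passage to the iterate $\mathcal{G}^{n}$ with the factor $\left(K_{\delta}C_{R^{\delta}}T\right)^{n}/n!$ is the standard (and, absent a smallness condition on $K_{\delta}C_{R^{\delta}}T$ or a weighted norm, necessary) refinement, so it fills in rather than departs from the paper's approach.
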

\begin{thm}
\label{thm:stable}If $u^{\delta}$ and $v^{\delta}$ are two solutions
of problem \eqref{eq:7} corresponding to the final observations $u_{T}^{\delta}$
and $v_{T}^{\delta}$ respectively, then for all $0\le t<T$,
\[
\left\Vert u^{\delta}\left(t\right)-v^{\delta}\left(t\right)\right\Vert \le\sqrt{2}\delta^{\frac{p}{q}\left(\frac{t}{T}-1\right)}\left(\ln\left(\frac{\left(Tq\right)^{k}}{k\delta}\right)\right)^{-\frac{kp}{q}\left(1-\frac{t}{T}\right)}\left(kTq\right)^{\frac{kp}{q}\left(1-\frac{t}{T}\right)}\mbox{exp}\left(C_{R^{\delta}}^{2}\left(T-t\right)^{2}\right)\left\Vert u_{T}^{\delta}-v_{T}^{\delta}\right\Vert .
\]
\end{thm}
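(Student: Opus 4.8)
The plan is to pass to the difference $w:=u^{\delta}-v^{\delta}$, convert the regularized representation \eqref{eq:7} into a scalar backward integral inequality by factoring out the data-amplification bound of Lemma \ref{lem:3}, and then close the estimate by a Gronwall argument whose Cauchy--Schwarz step is engineered to produce both the quadratic exponent and the constant $\sqrt{2}$ appearing in the statement.

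First I would subtract the two copies of \eqref{eq:7}; since $u^{\delta}$ and $v^{\delta}$ share the same filtering function, the difference $w(t):=u^{\delta}(t)-v^{\delta}(t)$ solves an equation with data $u_{T}^{\delta}-v_{T}^{\delta}$ and source $f_{R^{\delta}}(s,u^{\delta})-f_{R^{\delta}}(s,v^{\delta})$. Taking norms and using the spectral identity $\|\int_{0}^{\infty}\phi(\lambda)\,dE_{\lambda}g\|^{2}=\int_{0}^{\infty}|\phi(\lambda)|^{2}\,d\|E_{\lambda}g\|^{2}$ together with the $\lambda$-independent bounds of Lemma \ref{lem:3}, each spectral integral is controlled by the scalar factor
\[
\Theta(s,t):=(kTq)^{\frac{kp(s-t)}{qT}}\,\delta^{\frac{p(t-s)}{qT}}\left(\ln\left(\frac{(Tq)^{k}}{k\delta}\right)\right)^{\frac{-kp(s-t)}{qT}}.
\]
Applying the Lipschitz estimate \eqref{eq:2.5} for $f_{R^{\delta}}$ with constant $C_{R^{\delta}}$ then yields
\[
\|w(t)\|\le\Theta(T,t)\,\|u_{T}^{\delta}-v_{T}^{\delta}\|+C_{R^{\delta}}\int_{t}^{T}\Theta(s,t)\,\|w(s)\|\,ds.
\]

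The crucial observation is that $\Theta$ has a multiplicative (semigroup) structure: all three of its exponents are proportional to $s-t$, so a direct check gives $\Theta(s,t)\,\Theta(T,s)=\Theta(T,t)$. Setting $\psi(t):=\|w(t)\|/\Theta(T,t)$ and dividing the last inequality by $\Theta(T,t)>0$ therefore collapses it to the clean scalar form
\[
\psi(t)\le A+C_{R^{\delta}}\int_{t}^{T}\psi(s)\,ds,\qquad A:=\|u_{T}^{\delta}-v_{T}^{\delta}\|.
\]
To recover the quadratic exponent rather than the sharper linear one, I would square this using $(a+b)^{2}\le2a^{2}+2b^{2}$ and Cauchy--Schwarz, $\left(\int_{t}^{T}\psi\right)^{2}\le(T-t)\int_{t}^{T}\psi^{2}$, obtaining $\psi(t)^{2}\le2A^{2}+2C_{R^{\delta}}^{2}(T-t)\int_{t}^{T}\psi(s)^{2}\,ds$.

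Finally I would close the argument by a backward Gronwall step. After the change of variable $\tau=T-t$ the inequality reads $\tilde{\Psi}(\tau)\le2A^{2}+2C_{R^{\delta}}^{2}\tau\int_{0}^{\tau}\tilde{\Psi}$, and the integrating factor $e^{-C_{R^{\delta}}^{2}\tau^{2}}$ gives $\int_{0}^{\tau}\tilde{\Psi}\le2A^{2}\tau\,e^{C_{R^{\delta}}^{2}\tau^{2}}$, hence $\tilde{\Psi}(\tau)\le2A^{2}\bigl(1+2C_{R^{\delta}}^{2}\tau^{2}e^{C_{R^{\delta}}^{2}\tau^{2}}\bigr)\le2A^{2}e^{2C_{R^{\delta}}^{2}\tau^{2}}$ via the elementary bound $1+2y\,e^{y}\le e^{2y}$ for $y\ge0$. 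Taking square roots and undoing both the substitution and $\psi(t)=\|w(t)\|/\Theta(T,t)$ reproduces exactly $\|w(t)\|\le\sqrt{2}\,\Theta(T,t)\exp\!\bigl(C_{R^{\delta}}^{2}(T-t)^{2}\bigr)A$, which is the claimed estimate. I expect the main obstacle to be precisely this last step: the nonconstant coefficient $(T-t)$ created by Cauchy--Schwarz means the naive linear Gronwall bound does not match the stated exponent, and it is the $e^{-C_{R^{\delta}}^{2}\tau^{2}}$ integrating factor combined with $1+2y\,e^{y}\le e^{2y}$ that manufactures both the quadratic growth $\exp(C_{R^{\delta}}^{2}(T-t)^{2})$ and the constant $\sqrt{2}$.
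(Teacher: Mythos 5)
Your proposal is correct and follows essentially the same route as the paper: bound the spectral integrals by the $\lambda$-uniform filter estimate of Lemma \ref{lem:3}, rescale by the weight $\Theta(T,t)^{-1}$ (the paper's substitution $w(t)=\delta^{-\frac{pt}{qT}}\bigl(\ln\bigl(\frac{(Tq)^{k}}{k\delta}\bigr)\bigr)^{-\frac{kpt}{qT}}(kTq)^{\frac{kpt}{qT}}\|d^{\delta}(t)\|$ is exactly your division via the semigroup identity $\Theta(s,t)\Theta(T,s)=\Theta(T,t)$), square with $(a+b)^{2}\le 2(a^{2}+b^{2})$ and Cauchy--Schwarz, and close with a backward Gr\"onwall argument. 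In fact your write-up is slightly more complete than the paper's: where the paper simply invokes ``Gr\"onwall's inequality'' on $w^{2}(t)\le B+2C_{R^{\delta}}^{2}(T-t)\int_{t}^{T}w^{2}(s)\,ds$ despite the nonconstant coefficient $T-t$, you supply the missing justification via the integrating factor $e^{-C_{R^{\delta}}^{2}\tau^{2}}$ and the elementary bound $1+2ye^{y}\le e^{2y}$, which is precisely what produces $\exp\bigl(2C_{R^{\delta}}^{2}(T-t)^{2}\bigr)$.
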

\begin{proof}
From \eqref{eq:7} , one deduces that for all $0\le t<T$,
\begin{eqnarray*}
d^{\delta}\left(t\right) & = & u^{\delta}\left(t\right)-v^{\delta}\left(t\right)\\
 & = & \int_{0}^{\infty}\Phi_{\bar{\mu},\lambda}^{\delta}\left(T,t\right)dE_{\lambda}\left(u_{T}^{\delta}-v_{T}^{\delta}\right)-\int_{0}^{\infty}\int_{t}^{T}\Phi_{\bar{\mu},\lambda}^{\delta}\left(s,t\right)dE_{\lambda}\left(f_{R^{\delta}}\left(s,u^{\delta}\right)-f_{R^{\delta}}\left(s,v^{\delta}\right)\right)ds.
\end{eqnarray*}

Using Lemmas \ref{lem:3}, the elementary inequality $\left(a+b\right)^{2}\le2\left(a^{2}+b^{2}\right)$
and the global Lipschitz of $f_{R^{\delta}}$ by \eqref{eq:2.5},
we have
\[
\left\Vert d^{\delta}\left(t\right)\right\Vert ^{2}\le2\delta^{\frac{2p}{q}\left(\frac{t}{T}-1\right)}\left(\ln\left(\frac{\left(Tq\right)^{k}}{k\delta}\right)\right)^{-\frac{2kp}{q}\left(1-\frac{t}{T}\right)}\left(kTq\right)^{\frac{2kp}{q}\left(1-\frac{t}{T}\right)}\left\Vert u_{T}^{\delta}-v_{T}^{\delta}\right\Vert ^{2}+
\]
\begin{equation}
+2\delta^{\frac{2pt}{qT}}\left(\ln\left(\frac{\left(Tq\right)^{k}}{k\delta}\right)\right)^{\frac{2kpt}{qT}}\left(kTq\right)^{-\frac{2kpt}{qT}}C_{R^{\delta}}^{2}\left(T-t\right)\int_{t}^{T}\delta^{-\frac{2ps}{qT}}\left(\ln\left(\frac{\left(Tq\right)^{k}}{k\delta}\right)\right)^{-\frac{2pks}{qT}}\left(kTq\right)^{\frac{2kps}{qT}}\left\Vert d^{\delta}\left(s\right)\right\Vert ^{2}ds.\label{eq:10}
\end{equation}

Multiplying \eqref{eq:10} by $\delta^{-\frac{2pt}{qT}}\left(\ln\left(\frac{\left(Tq\right)^{k}}{k\delta}\right)\right)^{-\frac{2kpt}{qT}}\left(kTq\right)^{\frac{2kpt}{qT}}$
and putting
\[
w\left(t\right)=\delta^{-\frac{pt}{qT}}\left(\ln\left(\frac{\left(Tq\right)^{k}}{k\delta}\right)\right)^{-\frac{kpt}{qT}}\left(kTq\right)^{\frac{kpt}{qT}}\left\Vert d^{\delta}\left(t\right)\right\Vert ,
\]

we have
\[
w^{2}\left(t\right)\le2\delta^{-\frac{2p}{q}}\left(\ln\left(\frac{\left(Tq\right)^{k}}{k\delta}\right)\right)^{-\frac{2kp}{q}}\left(kTq\right)^{\frac{2kp}{q}}\left\Vert u_{T}^{\delta}-v_{T}^{\delta}\right\Vert ^{2}+2C_{R^{\delta}}^{2}\left(T-t\right)\int_{t}^{T}w^{2}\left(s\right)ds.
\]

Thanks to Gr\"onwall's inequality, we obtain
\[
w^{2}\left(t\right)\le2\delta^{-\frac{2p}{q}}\left(\ln\left(\frac{\left(Tq\right)^{k}}{k\delta}\right)\right)^{-\frac{2kp}{q}}\left(kTq\right)^{\frac{2kp}{q}}\mbox{exp}\left(2C_{R^{\delta}}^{2}\left(T-t\right)^{2}\right)\left\Vert u_{T}^{\delta}-v_{T}^{\delta}\right\Vert ^{2}.
\]

Hence, we conclude that
\[
\left\Vert u^{\delta}\left(t\right)-v^{\delta}\left(t\right)\right\Vert \le\sqrt{2}\delta^{\frac{p}{q}\left(\frac{t}{T}-1\right)}\left(\ln\left(\frac{\left(Tq\right)^{k}}{k\delta}\right)\right)^{-\frac{kp}{q}\left(1-\frac{t}{T}\right)}\left(kTq\right)^{\frac{kp}{q}\left(1-\frac{t}{T}\right)}\mbox{exp}\left(C_{R^{\delta}}^{2}\left(T-t\right)^{2}\right)\left\Vert u_{T}^{\delta}-v_{T}^{\delta}\right\Vert .
\]

This inequality follows the continuous dependence of the regularized
solution \eqref{eq:7} on $u_{T}^{\delta}$, and the theorem is proved.
\end{proof}
We have completed the stability result, and we now can say that our
constructed solution $u^{\delta}$ does uniquely exists by Lemma \ref{lem:exist}
and stable by Theorem \ref{thm:stable}. It remains to prove the convergence
coupled with the rate. The effective strategy is employing the approximate
solution corresponding to the exact data where we denote by $U^{\delta}$.
Generally speaking, the solution $U^{\delta}$ enables us, by the
triangle inequality, to deduce the main part of finding error estimate
to the exact solution $u$. Furthermore, as we know well, the current
tackled problem is widely general while our functional space in hands
is strictly limited, says $C\left(\left[0,T\right];\mathcal{H}\right)$
at most. Therefore, the next challenge lies in not only we have to
find an abstract space in such a way that it is capable of covering
our analysis, but also requires many efforts in computation. As a
consequence, the Gevrey regularity, as has been fully introduced,
will take place in our main results. The appearance of this regularity
is remarkably a concrete evidence for the difficulty of nonlinear
problems backward in time.
\begin{thm}
\label{thm:conv1}Suppose that the solution $u$ given by \eqref{eq:4}
of the problem \eqref{eq:1} belongs to $C\left(\left[0,T\right];\mathbb{G}_{\sigma}^{\gamma}\right)$
for $\sigma\ge Tq$ and $\gamma=2k$. Then for $\delta\in\left(0,1\right)$,
the error estimate over the space $\mathcal{H}$ between $u$ and
the functional $U^{\delta}$ defined by
\begin{equation}
U^{\delta}\left(t\right)=\int_{0}^{\infty}\Phi_{\bar{\mu},\lambda}^{\delta}\left(T,t\right)dE_{\lambda}u_{T}-\int_{t}^{T}\int_{0}^{\infty}\Phi_{\bar{\mu},\lambda}^{\delta}\left(s,t\right)dE_{\lambda}f_{R^{\delta}}\left(s,U^{\delta}\right)ds,\label{eq:U}
\end{equation}

is uniformly given by
\[
\left\Vert u\left(t\right)-U^{\delta}\left(t\right)\right\Vert \le\sqrt{2}\delta^{\frac{p\left(t-T\right)}{qT}+1}\left(\ln\left(\frac{\left(Tq\right)^{k}}{k\delta}\right)\right)^{-\frac{kp\left(T-t\right)}{qT}}\left(kTq\right)^{\frac{kp\left(T-t\right)}{qT}}\left\Vert u\right\Vert _{C\left(\left[0,T\right];\mathbb{G}_{\sigma}^{\gamma}\right)}\mbox{exp}\left(C_{R^{\delta}}^{2}\left(T-t\right)^{2}\right).
\]
\end{thm}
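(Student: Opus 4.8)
The plan is to mirror the stability argument of Theorem \ref{thm:stable}, the genuinely new ingredient being that the \emph{entire} regularization bias gets absorbed by the Gevrey norm of $u$. Set $D^{\delta}(t)=u(t)-U^{\delta}(t)$. The first thing I would record is the algebraic identity obtained from \eqref{eq:6},
\[
\Phi_{\bar{\mu},\lambda}^{\delta}(s,t)=e^{\bar{\mu}(t,s)\lambda}\,\frac{e^{-\bar{\mu}(0,T)\lambda}}{\delta\lambda^{k}+e^{-\bar{\mu}(0,T)\lambda}},
\]
and likewise for $\Phi_{\bar{\mu},\lambda}^{\delta}(T,t)$, so that writing $P_{\lambda}^{\delta}:=\delta\lambda^{k}/(\delta\lambda^{k}+e^{-\bar{\mu}(0,T)\lambda})$ one has $e^{\bar{\mu}(t,s)\lambda}-\Phi_{\bar{\mu},\lambda}^{\delta}(s,t)=e^{\bar{\mu}(t,s)\lambda}P_{\lambda}^{\delta}$. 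Subtracting \eqref{eq:U} from \eqref{eq:4}, and using that along the exact trajectory the truncation is inactive (i.e. $f(s,u)=f_{R^{\delta}}(s,u)$ once $R^{\delta}$ exceeds the uniform bound on $u(s)$ supplied by $u\in C([0,T];\mathbb{G}_{\sigma}^{\gamma})$), the data term and the ``bias'' part of the source term recombine, by commutativity of functions of $\mathcal{A}$, into $\int_{0}^{\infty}P_{\lambda}^{\delta}\,dE_{\lambda}u(t)=:P^{\delta}(\mathcal{A})u(t)$. This leaves only the Lipschitz remainder, so that
\[
D^{\delta}(t)=P^{\delta}(\mathcal{A})u(t)-\int_{t}^{T}\int_{0}^{\infty}\Phi_{\bar{\mu},\lambda}^{\delta}(s,t)\,dE_{\lambda}\bigl(f_{R^{\delta}}(s,u)-f_{R^{\delta}}(s,U^{\delta})\bigr)\,ds.
\]

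Recognizing the bias as $P^{\delta}(\mathcal{A})u(t)$ is the crux, since it turns an a priori unbounded Fourier multiplier into one tamed by the Gevrey weight. Because $\gamma=2k$ and $\sigma\ge Tq\ge\bar{\mu}(0,T)$, I would factor $(P_{\lambda}^{\delta})^{2}=\bigl[\delta^{2}e^{-2\sigma\lambda}(\delta\lambda^{k}+e^{-\bar{\mu}(0,T)\lambda})^{-2}\bigr]\lambda^{2k}e^{2\sigma\lambda}$ and note $e^{-\sigma\lambda}\le e^{-\bar{\mu}(0,T)\lambda}\le\delta\lambda^{k}+e^{-\bar{\mu}(0,T)\lambda}$, whence the bracket is $\le\delta^{2}$. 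Integrating against $d\|E_{\lambda}u(t)\|^{2}$ then gives the clean bound $\|P^{\delta}(\mathcal{A})u(t)\|\le\delta\|u(t)\|_{\mathbb{G}_{\sigma}^{\gamma}}\le\delta\|u\|_{C([0,T];\mathbb{G}_{\sigma}^{\gamma})}$; this extra factor $\delta$ is exactly what produces the ``$+1$'' in the exponent of the final estimate.

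For the remainder I proceed verbatim as in Theorem \ref{thm:stable}: bound $\Phi_{\bar{\mu},\lambda}^{\delta}(s,t)$ by Lemma \ref{lem:3}, use $(a+b)^{2}\le2(a^{2}+b^{2})$ together with the Lipschitz estimate \eqref{eq:2.5} and Cauchy--Schwarz in the time variable (which is what injects the factor $(T-t)$ in front of the integral). Abbreviating $\beta(t):=\delta^{-pt/qT}(\ln((Tq)^{k}/k\delta))^{-kpt/qT}(kTq)^{kpt/qT}$, so that Lemma \ref{lem:3} reads $\Phi_{\bar{\mu},\lambda}^{\delta}(s,t)\le\beta(s)/\beta(t)$, I multiply the squared inequality by $\beta(t)^{2}$, set $W(t)=\beta(t)^{2}\|D^{\delta}(t)\|^{2}$, and bound the now $t$-dependent forcing by its value $\beta(T)^{2}$ at $t=T$ (valid for $\delta$ small, where $\beta$ is increasing), to reach $W(t)\le2\delta^{2}\beta(T)^{2}\|u\|_{C([0,T];\mathbb{G}_{\sigma}^{\gamma})}^{2}+2C_{R^{\delta}}^{2}(T-t)\int_{t}^{T}W(s)\,ds$. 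Running the same Grönwall variant as in the stability proof, the $(T-t)$ integrates into a $(T-t)^{2}$ in the exponent, giving $W(t)\le2\delta^{2}\beta(T)^{2}\|u\|_{C([0,T];\mathbb{G}_{\sigma}^{\gamma})}^{2}\exp(2C_{R^{\delta}}^{2}(T-t)^{2})$. Dividing by $\beta(t)^{2}$, taking square roots, and substituting $\beta(T)/\beta(t)=\delta^{p(t-T)/qT}(\ln((Tq)^{k}/k\delta))^{-kp(T-t)/qT}(kTq)^{kp(T-t)/qT}$ reproduces the asserted bound.

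The main obstacle is really the recombination in the first paragraph: without collapsing the data bias and the source bias into $P^{\delta}(\mathcal{A})u(t)$, the source-bias multiplier $e^{\bar{\mu}(t,s)\lambda}P_{\lambda}^{\delta}$ is unbounded in $\lambda$ (it tends to $e^{\bar{\mu}(t,s)\lambda}$ as $\lambda\to\infty$), so it cannot be controlled by $\|f_{R^{\delta}}(s,u)\|$ alone. It is precisely the Gevrey hypothesis on $u$ (and not on $f$) that renders the combined multiplier $P_{\lambda}^{\delta}$ integrable, and the accompanying technical point to verify is that the truncation $f_{R^{\delta}}$ is inactive along the exact solution, which is where the regularity $u\in C([0,T];\mathbb{G}_{\sigma}^{\gamma})$ enters a second time.
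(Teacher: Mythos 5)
Your proposal is correct and follows essentially the same route as the paper's own proof: the same decomposition $u\left(t\right)-U^{\delta}\left(t\right)=P^{\delta}\left(\mathcal{A}\right)u\left(t\right)+\left(\text{Lipschitz remainder}\right)$ (the paper writes the bias multiplier as $\delta\lambda^{k}\Phi_{\bar{\mu},\lambda}^{\delta}\left(T,t\right)e^{\bar{\mu}\left(0,t\right)\lambda}$, which is exactly your $P_{\lambda}^{\delta}=\delta\lambda^{k}/\left(\delta\lambda^{k}+e^{-\bar{\mu}\left(0,T\right)\lambda}\right)$), the same appeal to the truncation being inactive along the exact solution, the same use of the Gevrey norm with $\gamma=2k$, $\sigma\ge Tq$, and the same Lemma \ref{lem:3} plus Gr\"onwall machinery borrowed from Theorem \ref{thm:stable}. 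The only cosmetic deviation is your uniform bound $\left\Vert P^{\delta}\left(\mathcal{A}\right)u\left(t\right)\right\Vert \le\delta\left\Vert u\right\Vert _{C\left(\left[0,T\right];\mathbb{G}_{\sigma}^{\gamma}\right)}$, which forces the extra step $\beta\left(t\right)\le\beta\left(T\right)$ (a smallness condition on $\delta$ the paper implicitly assumes anyway, e.g.\ for $\ln\left(\left(Tq\right)^{k}/k\delta\right)>0$ and $f=f_{R^{\delta}}$), whereas the paper bounds the bias via Lemma \ref{lem:3} so the weighted forcing is automatically constant in $t$ --- both yield the identical final constant.
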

\begin{proof}
Observe \eqref{eq:4} and \eqref{eq:U}, they can be rewritten as
\begin{equation}
u\left(t\right)=\int_{0}^{\infty}e^{\bar{\mu}\left(0,T\right)\lambda}\left(e^{-\bar{\mu}\left(0,t\right)\lambda}dE_{\lambda}u_{T}-\int_{t}^{T}e^{\left(\bar{\mu}\left(t,s\right)-\bar{\mu}\left(0,T\right)\right)\lambda}dE_{\lambda}f\left(s,u\right)ds\right),\label{eq:3.4}
\end{equation}
\begin{equation}
U^{\delta}\left(t\right)=\int_{0}^{\infty}\frac{1}{\delta\lambda^{k}+e^{-\bar{\mu}\left(0,T\right)\lambda}}\left(e^{-\bar{\mu}\left(0,t\right)\lambda}dE_{\lambda}u_{T}-\int_{t}^{T}e^{\left(\bar{\mu}\left(t,s\right)-\bar{\mu}\left(0,T\right)\right)\lambda}dE_{\lambda}f_{R^{\delta}}\left(s,U^{\delta}\right)ds\right).\label{eq:3.5}
\end{equation}

Therefore, putting $d^{\delta}\left(t\right)=u\left(t\right)-U^{\delta}\left(t\right)$
one deduces from \eqref{eq:3.4} and \eqref{eq:3.5} that
\begin{eqnarray*}
d^{\delta}\left(t\right) & = & \int_{0}^{\infty}\left(e^{\bar{\mu}\left(0,T\right)\lambda}-\frac{1}{\delta\lambda^{k}+e^{-\bar{\mu}\left(0,T\right)\lambda}}\right)\left(e^{-\bar{\mu}\left(0,t\right)\lambda}dE_{\lambda}u_{T}-\int_{t}^{T}e^{\left(\bar{\mu}\left(t,s\right)-\bar{\mu}\left(0,T\right)\right)\lambda}dE_{\lambda}f\left(s,u\right)ds\right)\\
 &  & +\int_{t}^{T}\int_{0}^{\infty}\Phi_{\bar{\mu},\lambda}^{\delta}\left(s,t\right)dE_{\lambda}\left(f_{R^{\delta}}\left(s,U^{\delta}\right)-f\left(s,u\right)\right)ds\\
 & = & \int_{0}^{\infty}\delta\lambda^{k}\Phi_{\bar{\mu},\lambda}^{\delta}\left(T,t\right)\left(e^{\bar{\mu}\left(0,T\right)\lambda}dE_{\lambda}u_{T}-\int_{t}^{T}\int_{0}^{\infty}e^{\bar{\mu}\left(0,s\right)\lambda}dE_{\lambda}f\left(s,u\right)\right)\\
 &  & +\int_{t}^{T}\int_{0}^{\infty}\Phi_{\bar{\mu},\lambda}^{\delta}\left(s,t\right)dE_{\lambda}\left(f_{R^{\delta}}\left(s,U^{\delta}\right)-f\left(s,u\right)\right)ds\\
 & = & \int_{0}^{\infty}\delta\lambda^{k}\Phi_{\bar{\mu},\lambda}^{\delta}\left(T,t\right)e^{\bar{\mu}\left(0,t\right)\lambda}dE_{\lambda}u\left(t\right)+\int_{t}^{T}\int_{0}^{\infty}\Phi_{\bar{\mu},\lambda}^{\delta}\left(s,t\right)dE_{\lambda}\left(f_{R^{\delta}}\left(s,U^{\delta}\right)-f\left(s,u\right)\right)ds.
\end{eqnarray*}

Thanks to Lemma \ref{lem:3}, we thus have
\begin{eqnarray*}
\left|d^{\delta}\left(t\right)\right| & \le & \left(kTq\right)^{\frac{kp\left(T-t\right)}{qT}}\delta^{\frac{p\left(t-T\right)}{qT}+1}\left(\ln\left(\frac{\left(Tq\right)^{k}}{k\delta}\right)\right)^{-\frac{kp\left(T-t\right)}{qT}}\int_{0}^{\infty}\lambda^{k}e^{\bar{\mu}\left(0,t\right)\lambda}d\left|E_{\lambda}u\left(t\right)\right|\\
 &  & +\left(kTq\right)^{\frac{kp\left(s-t\right)}{qT}}\int_{t}^{T}\int_{0}^{\infty}\delta^{\frac{p\left(t-s\right)}{qT}}\left(\ln\left(\frac{\left(Tq\right)^{k}}{k\delta}\right)\right)^{-\frac{kp\left(s-t\right)}{qT}}d\left|E_{\lambda}\left(f_{R^{\delta}}\left(s,U^{\delta}\right)-f\left(s,u\right)\right)\right|ds.
\end{eqnarray*}

Combining this with $\left(\mbox{A}_{2}\right)$ shows that
\[
\delta^{-\frac{2pt}{qT}}\left(\ln\left(\frac{\left(Tq\right)}{k\delta}\right)\right)^{-\frac{2kpt}{qT}}\left(kTq\right)^{\frac{2kpt}{qT}}\left\Vert d^{\delta}\left(t\right)\right\Vert ^{2}\le2\delta^{2\left(1-\frac{p}{q}\right)}\left(kTq\right)^{\frac{2kp}{q}}\left(\ln\left(\frac{\left(Tq\right)^{k}}{k\delta}\right)\right)^{-\frac{2kp}{q}}\left\Vert u\right\Vert _{C\left(\left[0,T\right];\mathbb{G}_{\sigma}^{\gamma}\right)}^{2}+
\]
\begin{equation}
+2\left(T-t\right)\int_{t}^{T}\delta^{-\frac{2ps}{qT}}\left(kTq\right)^{\frac{2kps}{qT}}\left(\ln\left(\frac{\left(Tq\right)}{k\delta}\right)\right)^{-\frac{2kps}{qT}}\left\Vert f_{R^{\delta}}\left(s,U^{\delta}\right)-f\left(s,u\right)\right\Vert ^{2}ds.\label{eq:3.6}
\end{equation}

where we have used the inequality $\left(a+b\right)^{2}\le2\left(a^{2}+b^{2}\right)$
and H\"older's inequality, and multiplied both sides of the inequality
by $\delta^{-\frac{2pt}{qT}}\left(\ln\left(\frac{\left(Tq\right)}{k\delta}\right)\right)^{-\frac{2kpt}{qT}}\left(kTq\right)^{\frac{2kpt}{qT}}$. 

Notice from the fact that $\lim_{\delta\to0^{+}}R^{\delta}=\infty$,
one has, by the idea of \cite{TDM14}, there exists $\delta_{0}>0$
such that $f\left(t,u\right)=f_{R^{\delta}}\left(t,u\right)$ for
all $\delta\in\left(0,\delta_{0}\right)$. Using this fact in combination
with the globally Lipschitz condition by \eqref{eq:2.5}, it thus
follows from \eqref{eq:3.6} that
\begin{equation}
w^{2}\left(t\right)\le2\delta^{2\left(1-\frac{p}{q}\right)}\left(kTq\right)^{\frac{2kp}{q}}\left(\ln\left(\frac{\left(Tq\right)^{k}}{k\delta}\right)\right)^{-\frac{2kp}{q}}\left\Vert u\right\Vert _{\mathbb{G}_{\sigma}^{\gamma}}^{2}+2\left(T-t\right)C_{R^{\delta}}^{2}\int_{t}^{T}w^{2}\left(s\right)ds,\label{eq:3.7}
\end{equation}

where we have putted
\[
w\left(t\right)=\delta^{-\frac{pt}{qT}}\left(\ln\left(\frac{\left(Tq\right)}{k\delta}\right)\right)^{-\frac{kpt}{qT}}\left(kTq\right)^{\frac{2kpt}{qT}}\left\Vert d^{\delta}\left(t\right)\right\Vert .
\]

So now, it is clear that applying Gr\"onwall's inequality to \eqref{eq:3.7}
gives us
\[
w^{2}\left(t\right)\le2\delta^{2\left(1-\frac{p}{q}\right)}\left(kTq\right)^{\frac{2kp}{q}}\left(\ln\left(\frac{\left(Tq\right)^{k}}{k\delta}\right)\right)^{-\frac{2kp}{q}}\left\Vert u\right\Vert _{C\left(\left[0,T\right];\mathbb{G}_{\sigma}^{\gamma}\right)}^{2}\mbox{exp}\left(2C_{R^{\delta}}^{2}\left(T-t\right)^{2}\right).
\]

Hence, we obtain
\[
\left\Vert d^{\delta}\left(t\right)\right\Vert \le\sqrt{2}\delta^{\frac{p\left(t-T\right)}{qT}+1}\left(\ln\left(\frac{\left(Tq\right)^{k}}{k\delta}\right)\right)^{-\frac{kp\left(T-t\right)}{qT}}\left(kTq\right)^{\frac{kp\left(T-t\right)}{qT}}\left\Vert u\right\Vert _{C\left(\left[0,T\right];\mathbb{G}_{\sigma}^{\gamma}\right)}\mbox{exp}\left(C_{R^{\delta}}^{2}\left(T-t\right)^{2}\right),
\]

which leads to the result of the theorem.\end{proof}
\begin{thm}
\label{thm:conv2}Assume that the solution $u$ given by \eqref{eq:4}
of the problem \eqref{eq:1} has the same regularity in Theorem \ref{thm:conv1}.
Then for $\delta\in\left(0,1\right)$, the error estimate over the
space $\mathcal{H}$ between the functional $u^{\delta}$ in \eqref{eq:7},
and the exact solution $u$ is uniformly given by
\[
\left\Vert u\left(t\right)-u^{\delta}\left(t\right)\right\Vert \le C_{T}\delta^{\frac{p}{q}\left(\frac{t}{T}-1\right)+1}\left(\ln\left(\frac{\left(Tq\right)^{k}}{k\delta}\right)\right)^{-\frac{kp}{q}\left(1-\frac{t}{T}\right)}\left(kTq\right)^{\frac{kp}{q}\left(1-\frac{t}{T}\right)}\mbox{exp}\left(C_{R^{\delta}}^{2}\left(T-t\right)^{2}\right),
\]

where $C_{T}$ is a generic positive constant depending only on $T$.\end{thm}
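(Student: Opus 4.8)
The plan is to reduce the estimate to the two results already established by inserting the intermediate quantity $U^{\delta}$, which solves the same regularized integral equation \eqref{eq:7} as $u^{\delta}$ but with the exact final value $u_{T}$ in place of the noisy observation $u_{T}^{\delta}$ (this is precisely the functional \eqref{eq:U} from Theorem \ref{thm:conv1}). The triangle inequality then splits the target into
\begin{equation*}
\left\Vert u\left(t\right)-u^{\delta}\left(t\right)\right\Vert \le\left\Vert u\left(t\right)-U^{\delta}\left(t\right)\right\Vert +\left\Vert U^{\delta}\left(t\right)-u^{\delta}\left(t\right)\right\Vert ,
\end{equation*}
and I would control the first summand by Theorem \ref{thm:conv1} and the second by Theorem \ref{thm:stable}.

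For the second summand, observe that $u^{\delta}$ and $U^{\delta}$ are two solutions of \eqref{eq:7} corresponding respectively to the observations $u_{T}^{\delta}$ and $u_{T}$. Hence Theorem \ref{thm:stable} applies verbatim with $v^{\delta}=U^{\delta}$, while assumption $\left(\mbox{A}_{3}\right)$ supplies the data bound $\left\Vert u_{T}^{\delta}-u_{T}\right\Vert \le\delta$. Feeding this into the stability estimate contributes one extra factor of $\delta$, so that the power $\delta^{\frac{p}{q}\left(\frac{t}{T}-1\right)}$ appearing there is upgraded to $\delta^{\frac{p}{q}\left(\frac{t}{T}-1\right)+1}$.

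The decisive point is that this is exactly the $\delta$-power carried by the convergence bound of Theorem \ref{thm:conv1}; moreover the logarithmic factor $\left(\ln\left((Tq)^{k}/k\delta\right)\right)^{-\frac{kp}{q}\left(1-\frac{t}{T}\right)}$, the algebraic factor $\left(kTq\right)^{\frac{kp}{q}\left(1-\frac{t}{T}\right)}$ and the exponential $\exp\left(C_{R^{\delta}}^{2}\left(T-t\right)^{2}\right)$ coincide in the two estimates once one rewrites $\frac{p\left(t-T\right)}{qT}=\frac{p}{q}\left(\frac{t}{T}-1\right)$ and $\frac{kp\left(T-t\right)}{qT}=\frac{kp}{q}\left(1-\frac{t}{T}\right)$. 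Thus both summands share the common envelope
\begin{equation*}
\delta^{\frac{p}{q}\left(\frac{t}{T}-1\right)+1}\left(\ln\left(\frac{\left(Tq\right)^{k}}{k\delta}\right)\right)^{-\frac{kp}{q}\left(1-\frac{t}{T}\right)}\left(kTq\right)^{\frac{kp}{q}\left(1-\frac{t}{T}\right)}\exp\left(C_{R^{\delta}}^{2}\left(T-t\right)^{2}\right),
\end{equation*}
and adding them merely multiplies this envelope by $\sqrt{2}\bigl(1+\left\Vert u\right\Vert _{C\left(\left[0,T\right];\mathbb{G}_{\sigma}^{\gamma}\right)}\bigr)$, a quantity finite by the assumed Gevrey regularity of $u$ and hence absorbable into the generic constant $C_{T}$.

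Since both preparatory theorems are already in hand, there is no genuine analytic obstacle here; the work is entirely bookkeeping. The only points requiring care are verifying that the two $\delta$-powers, the two logarithmic exponents and the two algebraic exponents match \emph{exactly}, so that the shared envelope can be factored out jointly, and confirming that the fixed finite number $\left\Vert u\right\Vert _{C\left(\left[0,T\right];\mathbb{G}_{\sigma}^{\gamma}\right)}$ is legitimately swept into $C_{T}$ under the stated ``generic constant depending only on $T$'' convention.
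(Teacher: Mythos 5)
Your proposal is correct and follows essentially the same route as the paper: the triangle inequality through the intermediate functional $U^{\delta}$ of \eqref{eq:U}, with Theorem \ref{thm:conv1} controlling $\left\Vert u\left(t\right)-U^{\delta}\left(t\right)\right\Vert$ and Theorem \ref{thm:stable} together with $\left(\mbox{A}_{3}\right)$ contributing the extra factor $\delta$ to $\left\Vert U^{\delta}\left(t\right)-u^{\delta}\left(t\right)\right\Vert$, after which the common envelope is factored out and the Gevrey norm of $u$ is absorbed into the generic constant. Your bookkeeping of the matching exponents and the final constant $\sqrt{2}\left(1+\left\Vert u\right\Vert _{C\left(\left[0,T\right];\mathbb{G}_{\sigma}^{\gamma}\right)}\right)$ reproduces the paper's argument exactly.
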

\begin{proof}
Due to Theorem \ref{thm:stable}-\ref{thm:conv1} and the assumption
$\left(\mbox{A}_{3}\right)$, the proof is straightforward. Indeed,
using the triangle inequality we have the following estimate
\begin{eqnarray*}
\left\Vert u\left(t\right)-u^{\delta}\left(t\right)\right\Vert  & \le & \left\Vert u\left(t\right)-U^{\delta}\left(t\right)\right\Vert +\left\Vert U^{\delta}\left(t\right)-u^{\delta}\left(t\right)\right\Vert \\
 & \le & \sqrt{2}\delta^{\frac{p\left(t-T\right)}{qT}+1}\left(\ln\left(\frac{\left(Tq\right)^{k}}{k\delta}\right)\right)^{-\frac{kp\left(T-t\right)}{qT}}\left(kTq\right)^{\frac{kp\left(T-t\right)}{qT}}\left\Vert u\right\Vert _{C\left(\left[0,T\right];\mathbb{G}_{\sigma}^{\gamma}\right)}\mbox{exp}\left(C_{R^{\delta}}^{2}\left(T-t\right)^{2}\right)\\
 &  & +\sqrt{2}\delta^{\frac{p}{q}\left(\frac{t}{T}-1\right)}\left(\ln\left(\frac{\left(Tq\right)^{k}}{k\delta}\right)\right)^{-\frac{kp}{q}\left(1-\frac{t}{T}\right)}\left(kTq\right)^{\frac{kp}{q}\left(1-\frac{t}{T}\right)}\mbox{exp}\left(C_{R^{\delta}}^{2}\left(T-t\right)^{2}\right)\left\Vert u_{T}^{\delta}-u_{T}\right\Vert \\
 & \le & \sqrt{2}\delta^{\frac{p}{q}\left(\frac{t}{T}-1\right)+1}\left(\ln\left(\frac{\left(Tq\right)^{k}}{k\delta}\right)\right)^{-\frac{kp}{q}\left(1-\frac{t}{T}\right)}\left(kTq\right)^{\frac{kp}{q}\left(1-\frac{t}{T}\right)}\mbox{exp}\left(C_{R^{\delta}}^{2}\left(T-t\right)^{2}\right)\left(1+\left\Vert u\right\Vert _{C\left(\left[0,T\right];\mathbb{G}_{\sigma}^{\gamma}\right)}\right).
\end{eqnarray*}

So our objective is well-accomplished. This completes the proof of
the theorem.\end{proof}
\begin{rem}
Let us now sum up what we have obtained in Lemma \ref{lem:exist}
and Theorem \ref{thm:stable}-\ref{thm:conv2}. In principle, comparing
our stability estimate with previous works, it is better or rather
general than the previous orders, \emph{e.g.} in \cite{CO94,DB05,LL67,TQTT13-1,TBMN15,TT10}.
As might be expected, the order of the stability here is $\delta^{\frac{p}{q}\left(\frac{t}{T}-1\right)}\left(\ln\left(\frac{\left(Tq\right)^{k}}{k\delta}\right)\right)^{-\frac{kp}{q}\left(1-\frac{t}{T}\right)}$
containing the result in \cite[Theorem 2]{DA12} where $p=q=1$ are
treated.

Furthermore, the modified quasi-boundary value method has also been
proved that it is beneficial if one considers the error estimate in
the original time $t=0$. More rigorously, the result in Theorem \ref{thm:conv2}
says that
\[
\left\Vert u\left(0\right)-u^{\delta}\left(0\right)\right\Vert \le C_{T}\delta^{1-\frac{p}{q}}\left(\ln\left(\frac{\left(Tq\right)^{k}}{k\delta}\right)\right)^{-\frac{kp}{q}}\left(kTq\right)^{\frac{kp}{q}}\mbox{exp}\left(C_{R^{\delta}}^{2}T^{2}\right).
\]

It is thus evident to claim that the modified quasi-boundary value
proposed here, in general, not only gives a sharp and better approximation
than many well-known methods such as quasi-reversibility studied in
\cite{LL67}, stabilized quasi-reversibility in \cite{TDM14} and
quasi-boundary value investigated in \cite{CO94,DB05,Show83}, but
also proves the applicability of the method to a wide range of nonlinear
parabolic equations.
\end{rem}

\begin{rem}
It is worth noting that if the nonlinear source term $f\left(\cdot,u\right)$
and initial data $u\left(0\right)$ belong to the space $L^{2}\left(0,T;\mathbb{G}_{\sigma}^{\gamma}\right)$
and $\mathcal{D}\left(\mathcal{A}^{\gamma}\right)$ for $\sigma\ge Tq$
and $\gamma=2k$, respectively, we then obtain the same error estimate
in Theorem \ref{thm:conv2}. Additionally, it is comprehensive that
we may assume $\gamma\ge2k$ if $\lambda\ge1$. Moreover, it can be
proved in Theorem \ref{thm:conv2} that if further assume that $u\in C\left(\left[0,T\right];\mathbb{G}_{\sigma'}^{\gamma}\cap\mathbb{G}_{\sigma}^{\gamma}\right)$
for $\sigma'=Tp$ and $p<q$, the error can be extremely rigorously
estimated in the sense that
\[
C_{1}\delta^{\frac{p}{q}\left(\frac{t}{T}-1\right)+1}\left(\ln\left(\frac{\left(Tq\right)^{k}}{k\delta}\right)\right)^{-\frac{kp}{q}\left(1-\frac{t}{T}\right)}\le\left\Vert u\left(t\right)-u^{\delta}\left(t\right)\right\Vert \le C_{2}\delta^{\frac{p}{q}\left(\frac{t}{T}-1\right)+1}\left(\ln\left(\frac{\left(Tq\right)^{k}}{k\delta}\right)\right)^{-\frac{kp}{q}\left(1-\frac{t}{T}\right)},
\]

in which $C_{1}=C\left(k,p,q,T,\left\Vert u\right\Vert _{C\left(\left[0,T\right];\mathbb{G}_{\sigma'}^{\gamma}\right)}\right)$
and $C_{2}=C\left(k,p,q,T,R^{\delta},\left\Vert u\right\Vert _{C\left(\left[0,T\right];\mathbb{G}_{\sigma}^{\gamma}\right)}\right)$.
\end{rem}

\section{Conclusions and future challenges}

This is the moment to briefly review what we have done in this paper
and what remains to be done.

We have generalized the framework to the certain modified quasi-boundary
value method, and applied it to approximate the abstract backward
parabolic problem with time-dependent coefficients and local Lipschitzian
source. Under Gevrey regularity on the true solution, theoretical
considerations claim that our method is stable in $\mathcal{H}$ for
every noise level and uniformly convergent in $\mathbb{G}_{\sigma}^{\gamma}$.
The error estimate exhibits a faster (or rather general) rate than
the quasi-reversibility method and the quasi-boundary value method
with their modified versions. Furthermore, the error at the original
point is highly valuable. This work is an improvement for the results
obtained in, for example, \cite{DA12,DB05,TDMK15,TT10}.

Bearing in mind the monumental intellectual effort that went into
the last two decades of developing regularization methods, it would
have been surprising that we have been able to analyze in a considerably
more complicated framework. Nevertheless, much further investigation,
as we now indicate, is required. We thus wish to single out other
problems and challenges for future work. These include:
\begin{enumerate}
\item The local Lipschitzian source is obviously general, but in many biological
situations, nonlocal effects (\emph{e.g.} local movement of species)
are coupled with long-range influences, i.e. nonlinear partial integro-differential
equations must be considered. In physics, the well-known viscous Burgers'
equation had a very long history of establishment and development
in the studies of fluid mechanics and other related ramifications.
Even though this equation can be converted to the linear heat equation
by the Hopf-Cole transformation, it is more interesting to attack
on the original equation. In principle, it needs to take into consideration
the more realistic equation:
\[
\partial_{t}u-\mu\left(t,x,u,\int_{\Omega}udx\right)\Delta u=f\left(t,x,u,\nabla u,\int_{\Omega}udx\right).
\]

\item It seems that we may find a filtering function to obtain the regularized
problem. However, we have yet to see how well it works on a specific
model such as reaction-diffusion systems for two (or larger) species.
Besides, constructing a new truncated function in this case is a huge
challenge.
\item The initial-boundary value problem of a semi-linear parabolic equation
is somehow blow-up in finite time. It is either the intention or the
message to challenge ourselves this question for the backward problem.
All we need here is precisely providing the relevant theoretical details
and asymptotic behavior.
\item Another theoretical issue must be properly addressed stems from facing
a model with a periodically perforated domain which is getting more
attention in homogenization field in recent years. It is well-known
that there exists a family of eigenvectors and a corresponding family
of eigenvalues which have almost the same fundamental properties with
the ones in the common eigenvalue problem. In particular, those eigenvectors
are an orthonormal basis in $L^{2}$ over the considered perforated
domain, and their eigenvectors are increasing and tend to infinity
in a weak sense. For more details, the reader can be referred to \cite[Section 3.3]{CP99}.
As a consequence, a better understanding of how to treat this problem
is needed.
\item Concerning numerical issues, currently there is no rigorously numerical
analysis and treatment for computing the mild solution (attached by
the truncated function) even if it is typically linear. In the simple
case where eigenelements are explicitly known, one may try to use
the Filon method or many improvements/approaches (exotic quadrature,
Birkhoff expansions,...) to treat the highly oscillatory integrals
in the Fourier coefficients. However, among dozens of methods, we
need a great analyzing comparison. Since the backward problem is extremely
sensitive, we are not sure whether the ill-conditioned numerical problem
happens or not.
\item Computing the problem is, in general, expensive, and the cost is bigger
and bigger while solving with multiple dimensions. Consequently, efficient
algorithms in combination with parallel computing need to be developed.
On the other hand, it is not clear how errors associated with approximating
the mild solution would impact the stability of the regularized problem
of interest here. Recall that the filtering function has ``dangerous''
factors, ones increase so fast while the others reduces exponentially.
This may exacerbate such errors.\end{enumerate}
\begin{acknowledgement*}
The author would like to give many thanks to Dr. Nguyen Huy Tuan for
the whole-hearted guidance when the author was studying at Department
of Mathematics and Computer Science, Ho Chi Minh City University of
Science, Vietnam. The paper is also observed as his deep gratitude
to Department of Mathematics and Computer Science, Eindhoven University
of Technology in The Netherlands. Their hospitality is gratefully
acknowledged. He wishes to thank the handling editor as well as anonymous
referees for their most helpful comments on this paper.
\end{acknowledgement*}
\bibliographystyle{plain}
\bibliography{mybib}

\end{document}